\def\ba{\begin{align}}
	\def\ea{\end{align}}
\def\bp{\begin{proof}}
	\def\ep{\end{proof}}
\theoremstyle{plain}
\newtheorem{theorem}{Theorem}[section]
\newtheorem{proposition}[theorem]{Proposition}
\theoremstyle{definition}
\newtheorem{definition}[theorem]{Definition}
\newtheorem{remark}[theorem]{Remark}
	\def\bp{\begin{proof}}
		\def\ep{\end{proof}}
\begin{document}
		
		\title[A harmonic level sets proof of a positive mass theorem]{A harmonic level set proof of a positive mass theorem}
		
		\author{Rondinelle Marcolino Batista}
		\address{Universidade Federal do Piau\'{i} (UFPI), Departamento de Matem\'{a}tica, Campus Petr\^onio Portella, 64049-550, Teresina, PI, Brazil}
		\email{rmarcolino@ufpi.edu.br}
		\author{Levi Lopes de Lima}
		\address{Universidade Federal do Cear\'a (UFC),
			Departamento de Matem\'{a}tica, Campus do Pici, Av. Humberto Monte, s/n, Bloco
			914, 60455-760,
			Fortaleza, CE, Brazil.}
		\email{levi@mat.ufc.br}
		\thanks{
		 L.L. de Lima has been suported by 
			FUNCAP/CNPq/PRONEX 00068.01.00/15.}

		\begin{abstract}
		We provide a harmonic level set proof (along the lines of the argument in \cite{bray2022harmonic}) of the positive mass theorem for asymptotically flat $3$-manifolds with a non-compact boundary first established by Almaraz-Barbosa-de Lima in \cite{almaraz2014positive}.  
		\end{abstract}

		\maketitle

	\section{Introduction}
	
	To each $3$-dimensional asymptotically flat Riemannian manifold $(M,g)$, viewed as a (time-symmetric) initial data set of an isolated gravitational system in the context of General Relativity, the ADM construction associates an asymptotic invariant, the {\em ADM mass}, which may be viewed as the total mass of the system as computed by the appropriate version of the Hamiltonian formalism \cite{arnowitt1962gravitation, harlow2020covariant}. 
More precisely, if $g_{ij}$ are the coefficients of $g$ with respect to an asymptotically flat chart at infinity then 
	\begin{equation}\label{adm:mass}
		m_{ADM}(M,g)=\lim_{r\to +\infty}\frac{1}{16\pi}\int_{S^2_r}\left(g_{ij,j}-g_{jj,i}\right)\mu^i dS^2_r,
		\end{equation}
		where $\mu$ is the outer unit normal vector field to a large coordinate sphere $S^2_r$ of radius $r$ in the asymptotic region, the comma denotes partial differentiation and we use the Einstein convention of summing over repeated indexes here and elsewhere.
	Under suitable decay assumptions on the underlying matter fields, this ADM mass turns out to be a conserved quantity under the time propagation of the initial data set \cite{christodoulou2008mathematical,de2023conserved} and hence qualifies as a fundamental invariant of the associated gravitational system. 
	
	Physical reasoning demands that under a suitable dominant energy condition the ADM mass of $(M,g)$ should be non-negative and, moreover, its vanishing should imply that $(M,g)$ is isometric to the flat Euclidean space $(\mathbb R^3,\delta)$, the initial data set of Minkowski $4$-space. The difficulty in settling this assertion lies in the fact that, as it is apparent from (\ref{adm:mass}), the ADM mass is a flux integral at spacial infinity whose integrand does not seem to display any direct relationship to the scalar curvature, which in this time-symmetric case turns out to be the relevant energy density. Hence, any approach to this result should involve the consideration of a {\em global} object mediating between the ADM $1$-form $\omega_i=g_{ij,j}-g_{jj,i}$ and the scalar curvature.  
	Despite these inherent difficulties, a proof of the corresponding Positive Mass Theorem  (PMT) was eventually obtained by Schoen-Yau \cite{schoen1979proof} using minimal surfaces. Soon after that Witten was able to provide a simpler proof based on spinors \cite{witten1981new}. Yet another approach follows from the weak inverse mean curvature flow in \cite{huisken2001}. We should also mention the elementary approach in \cite{lam2011graph}, which applies to initial data sets which may be realized as graphs in Euclidean space; this approach has been recast in \cite{de2015adm}, where it has been shown that $\omega_i$ relates to the Newton tensor of the shape operator of the underlying graph. To this 
 (by no means exhaustive!) account of approaches to PMT we add the recent ``quasi-local'' polyhedron comparison proof following from the main result in \cite{li2020polyhedron}, which by its turn confirms a conjecture on dihedral rigidity due to Gromov \cite{gromov2014dirac}.

More recently, a new approach to PMT emerged from the seminal work by Stern \cite{stern2022scalar}. An argument leading to PMT eventually appeared in \cite{bray2022harmonic}; see \cite{bray2023spacetime} for an updated survey on this subject. 
As already pointed out in \cite{bray2022harmonic}, this reasoning somehow resembles the  proofs by Schoen-Yau
and Witten, with harmonic functions replacing spinors and their level sets playing the role of minimal surfaces. 
We remark that a similar proof, exploring the level sets of a certain Green's function, has appeared in \cite{agostiniani2021green}. 

On the other hand, motivated by questions related to the Yamabe problem for compact manifolds with boundary, it has been introduced in \cite{almaraz2014positive} a mass-type invariant for asymptotically flat $3$-manifolds with a {\em non-compact}
boundary $\Sigma$ modeled at infinity on Euclidean half-space $\mathbb R^3_+=\{x\in\mathbb R^3;x_3\geq 0\}$. 
	       
\begin{definition}\label{def:af:abdl}\cite{almaraz2014positive}
	A Riemannian $3$-manifold $(M,g)$ with a non-compact boundary $\Sigma$ is {\em asymptotically flat} with decay rate $\tau>1/2$ if there exists a compact subset $K\subset M$ and a diffeomorphism $\Psi:M\backslash K\to\mathbb R^3_+\backslash B^+_1(0)$ such that the following expansion holds as $r\to +\infty$:
	\begin{equation}\label{asym:exp:g}
		|g_{ij}(x)-\delta_{ij}|+r|g_{ij,k}(x)|+r^2|g_{ij,kl}(x)|=O(r^{-\tau}), 
		\end{equation}
		Here, $x=(x_1,x_2,x_3)$ is the (asymptotically flat) coordinate system induced by $\Psi$, $r=|x|$, $g_{ij}$ are the components of $g$ with respect to $x$, $B_1^+(0)=\{x\in\mathbb R^3_+;|x|\leq 1\}$ and $\delta$ is the standard flat metric. We also assume that $R_g$, the scalar curvature of $g$, and $H_g$, the mean curvature of the embedding $\Sigma \hookrightarrow M$, are both integrable. 
	\end{definition}	
	
	\begin{remark}\label{simplic}
		As it is immediate from Definition \ref{def:af:abdl}, we adopt throughout this paper the simplifying assumption that $M$ carries a {\em unique} asymptotically flat half-end $M\backslash K$. Also, we take $\Sigma$ to be connected.  
		\end{remark}
	
	It turns that we may attach to any such manifold an asymptotic invariant by means of an expression similar to (\ref{adm:mass}).
	
	\begin{definition}\label{def:mass:abdl}\cite{almaraz2014positive} Under the conditions above, the {\em mass} of $(M,g)$ is given by 
		\begin{equation}\label{def:mass:abdl:2}
			\mathfrak m_{(M,g)}=\lim_{r\to +\infty}\frac{1}{16\pi}\left\{\int_{S^2_{r,+}}\left(g_{ij,j}-g_{jj,i}\right)\mu^i dS^2_{r,+}+\int_{S^1_r}g_{\alpha 3}\vartheta^\alpha dS^1_r\right\},
			\end{equation}       
			where $S^2_{r,+}$ is a large coordinate hemisphere of radius $r$ with outer unit normal $\mu$ and $\vartheta$ is the outer unit co-normal to the circle $S^1_r=\partial S^2_{r,+}$, oriented as the boundary of the bounded region $\Sigma_r\subset\Sigma$. 
	\end{definition}

It has been shown in \cite{almaraz2014positive}	that the limit on the right-hand side of (\ref{def:mass:abdl:2}) exists and its value does not depend on the particular asymptotically flat coordinates chosen. Thus, $\mathfrak m_{(M,g)}$ is an invariant of the asymptotic geometry of $(M,g)$ which also may be viewed as a conserved quantity \cite{almaraz2019spacetime,harlow2020covariant,de2023conserved}. Moreover, under suitable energy conditions imposed both on $M$ and along $\Sigma$, a positive mass theorem holds true in this setting as well.

\begin{theorem}\label{pmt:res}\cite{almaraz2014positive}
		If $(M,g)$ is an asymptotically flat manifold with a non-compact boundary $\Sigma$  satisfying $R_g \geq 0$ and $H_g \geq   0$ then  there holds $\mathfrak m_{(M,g)}\geq 0$, with the equality occurring only if $(M,g)=(\mathbb R^3,\delta)$ isometrically. 
	\end{theorem}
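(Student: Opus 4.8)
The plan is to transplant the harmonic level set method of \cite{bray2022harmonic} to the boundary setting. Concretely, I would fix one of the two asymptotically flat coordinates tangent to $\Sigma$, say $x_1$, and solve the Neumann problem $\Delta_g u = 0$ in $M$, $\partial_N u = 0$ along $\Sigma$ (with $N$ the inner unit normal of $\Sigma$ in $M$), under the normalization $u - x_1 \to 0$ at infinity. The choice of a boundary-parallel direction is essentially forced: the homogeneous Neumann condition is compatible at infinity only with a coordinate whose model gradient is tangent to $\{x_3 = 0\}$. Existence together with the asymptotic expansion $u = x_1 + O(r^{1-\tau})$, and the corresponding decay of $\nabla u - \partial_{x_1}$ and of $\nabla^2 u$, follow from the linear elliptic theory on asymptotically flat manifolds with boundary in weighted Sobolev spaces; in particular $|\nabla u| \to 1$, and for regular $t$ the level sets $M_t := u^{-1}(t)$ are non-compact surfaces meeting $\Sigma$ orthogonally and asymptotic to the half-planes $\{x_1 = t,\, x_3 \ge 0\}$.

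Next I would establish the Stern-type integral identity on $M_{[\underline t,\bar t]} := u^{-1}([\underline t,\bar t])$. Starting from the Bochner formula and $\Delta u = 0$, on the regular set one has $\Delta|\nabla u| \ge |\nabla u|^{-1}(|\nabla^2 u|^2 - |\nabla|\nabla u||^2) + |\nabla u|^{-1}\mathrm{Ric}(\nabla u,\nabla u)$; combining this with the refined Kato inequality and the traced Gauss equation $2\mathrm{Ric}(\nu,\nu) = R_g - 2K + H_{M_t}^2 - |\mathrm{II}_{M_t}|^2$ (here $\nu = \nabla u/|\nabla u|$ and $K$ is the Gauss curvature of $M_t$), then integrating and applying the co-area formula, yields a lower bound for the flux difference $\int_{M_{\bar t}}|\nabla u| - \int_{M_{\underline t}}|\nabla u|$ in terms of $\int_{\underline t}^{\bar t}\int_{M_t}(\tfrac12|\nabla u|^{-1}|\nabla^2 u|^2 + \tfrac12 R_g|\nabla u| - K|\nabla u|)$, together with boundary contributions along $\Sigma$. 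The crucial point is the $-K$ term: since $\partial_N u = 0$ forces $\nu \in T\Sigma$ and hence $M_t \perp \Sigma$, the Gauss--Bonnet theorem for the surface-with-boundary $M_t$ reads $\int_{M_t} K + \int_{\partial M_t} k_g = 2\pi\chi(M_t)$, and the geodesic curvature of $\partial M_t$ inside $M_t$ equals $\mathrm{II}^\Sigma(e,e)$, with $e$ the unit tangent to $\partial M_t$. Writing $\mathrm{II}^\Sigma(e,e) = H_g - \mathrm{II}^\Sigma(\nu,\nu)$ and integrating in $t$ (via the co-area formula on $\Sigma$, where $|\nabla^\Sigma u| = |\nabla u|$) converts the level-set curvature into the Euler-characteristic term $2\pi\chi(M_t)$ plus a boundary integral controlled by $H_g \ge 0$.

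I would then evaluate the flux terms as $\bar t \to +\infty$ and $\underline t \to -\infty$ using the asymptotics of $u$. Following the bookkeeping of \cite{bray2022harmonic}, in which the divergent area-like pieces of the flux integrals cancel against the terms $\int 2\pi\chi(M_t)\,dt$ over an exhausting family of coordinate balls (here $\chi(M_t) = 1$ for the half-plane-type level sets), the finite remainder should reproduce, after a careful expansion of $g_{ij,j} - g_{jj,i}$ over the coordinate hemispheres together with the residual $\mathrm{II}^\Sigma(\nu,\nu)$ boundary term over the circles $S^1_r$, exactly the two integrals defining $16\pi\,\mathfrak m_{(M,g)}$ in (\ref{def:mass:abdl:2}). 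The outcome is an inequality of the shape $16\pi\,\mathfrak m_{(M,g)} \ge \int_M(|\nabla u|^{-1}|\nabla^2 u|^2 + R_g|\nabla u|) + \int_\Sigma H_g\,(\cdots)$, in which every integrand on the right is nonnegative by $R_g \ge 0$ and $H_g \ge 0$; hence $\mathfrak m_{(M,g)} \ge 0$.

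For rigidity, equality $\mathfrak m_{(M,g)} = 0$ would force $\nabla^2 u \equiv 0$, $R_g \equiv 0$ and $H_g \equiv 0$, so that $|\nabla u|$ is constant, $\nabla u$ is parallel, $\int_{M_t} K = 2\pi$, and $\Sigma$ is totally geodesic; the splitting induced by the parallel field $\nabla u$, combined with the same argument run for the second boundary-parallel direction $x_2$, then identifies $(M,g)$ with the flat half-space $(\mathbb{R}^3_+,\delta)$. The main obstacle I anticipate lies entirely in the boundary analysis: both (i) producing the Neumann harmonic function with sharp enough asymptotics in the asymptotically flat category, and especially (ii) tracking the boundary terms through Stern's identity, i.e. showing that orthogonality from the Neumann condition makes $H_g$ enter Gauss--Bonnet with the correct sign and that the residual normal term $\mathrm{II}^\Sigma(\nu,\nu)$ assembles precisely into the boundary flux $\int_{S^1_r} g_{\alpha 3}\vartheta^\alpha$ of (\ref{def:mass:abdl:2}). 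This identification of the extra boundary mass term is the genuinely new ingredient relative to the boundaryless argument of \cite{bray2022harmonic}.
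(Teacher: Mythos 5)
Your route is genuinely different from the paper's, and it has two gaps, one of which is fatal as written. First, you solve your Neumann problem on all of $M$ and assert $\chi(M_t)=1$ for the level sets. For a general $(M,g)$ satisfying only $R_g\geq 0$, $H_g\geq 0$, this fails: $M$ may contain closed minimal surfaces and free boundary minimal discs (think of a half-Schwarzschild-type example), and then level sets of a harmonic function can have closed components or several components, destroying the Euler characteristic bound that drives the whole Gauss--Bonnet bookkeeping. The paper's first and indispensable step is to excise the trapped region and pass to the exterior region $M_{\rm ext}$ (Theorem \ref{ext:reg}, from \cite{koerber2019riemannian}), which is simply connected, carries no further minimal surfaces, and whose extra boundary consists of minimal spheres and free boundary minimal discs on which a homogeneous Neumann condition is imposed (the PDE being solved by doubling across these pieces, Proposition \ref{anal:mach}); simple connectedness (Remark \ref{simply}) is then what rules out level set components missing the cutoff cylinder. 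Without this step your argument does not prove Theorem \ref{pmt:res}, only a special case.

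Second, your proposed mechanism for the boundary mass term is incorrect. You keep $\Sigma$ as a Neumann boundary, so $\nabla u$ is tangent to $\Sigma$; differentiating $g(\nabla u,N)=0$ in directions tangent to $\Sigma$ gives $\nabla^2u(X,N)=\pm\,\mathrm{II}^\Sigma(X,\nabla u)$, hence $\partial_N|\nabla u|=\pm\,|\nabla u|\,\mathrm{II}^\Sigma(\nu,\nu)$ along $\Sigma$. Consequently the residual term $\mathrm{II}^\Sigma(\nu,\nu)$ coming from the geodesic curvature in Gauss--Bonnet cancels against the flux of $|\nabla u|$ through $\Sigma$; this cancellation is precisely why Proposition 4.2 of \cite{bray2022harmonic} (Proposition \ref{integral} here) exhibits only the mean curvature $H_{P_1}$ of the Neumann boundary and no $\mathrm{II}(\nu,\nu)$ term. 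So $\mathrm{II}^\Sigma(\nu,\nu)$ cannot ``assemble into'' $\int_{S^1_r}g_{\alpha 3}\vartheta^\alpha$; in your setup that term would have to be extracted from the corner angles and the $P_2$-flux asymptotics where the cutoff surface meets $\Sigma$, an expansion that Lemma 6.2 of \cite{bray2022harmonic} does not cover and that you have not carried out. The paper sidesteps both difficulties by making the opposite choice you dismiss as impossible: it uses the \emph{normal} coordinate $x_3$ with a \emph{Dirichlet} condition $u=0$ on $\Sigma_{\rm ext}$ (Neumann only on the minimal-surface pieces). Then $\Sigma_{\rm ext}$ is itself a regular level set, harmonicity gives $\partial_\nu|\nabla u|=-H_g|\nabla u|$ there, so $H_g$ enters Proposition \ref{integral} directly through the $P_2$-flux with the right sign, and the boundary mass integral $\int_{\partial D^0_L}g_{\alpha 3}\vartheta^\alpha$ falls out of an integration by parts of ${\rm div}(\partial_3)$ over $D^+_L$ in the asymptotic expansion \eqref{eq2}. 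Your rigidity sketch would also need revisiting (the Dirichlet choice makes $B_{\Sigma_{\rm ext}}$ proportional to $\nabla^2u$, which is what the paper exploits before doubling and applying the Ashtekar--Hansen-type formulas), but the two issues above are the substantive ones.
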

	
	\begin{remark}\label{space-time}
		By means of an adaptation of Witten's spinorial method, a proof of a space-time version of 
		Theorem \ref{pmt:res} has been carried out in \cite{almaraz2019spacetime} under the selection of suitable dominant energy conditions both on the interior and along the boundary of the (not necessarily time-symmetric) initial data set which may be interpreted in terms of the Lagrangian formulation of General Relativity in the presence of a boundary; see \cite[Remark 2.7]{almaraz2019spacetime}. In the time-symmetric case considered here, these space-time energy conditions reduce to $R_g\geq0$ and $H_g\geq 0$, which are then justified on purely physical grounds.  
		\end{remark}

There exist by now at least five different proofs of Theorem \ref{pmt:res}. It is already shown in \cite{almaraz2014positive} that the original arguments by Schoen-Yau and Witten carry over to this setting. Also, in \cite[Proposition 4.1]{almaraz2014positive} it is proved that ``harmonically flat'' initial data sets are dense in the space of all asymptotically flat initial data sets satisfying the corresponding energy conditions. As explained there, after doubling a ``generic'' initial data set along the boundary, the proof gets reduced to applying a positive mass theorem for manifolds with corners \cite{miao2002positive}. Another proof using free boundary surfaces appears in \cite{chai2018positive} and the result also follows from the argument involving the weak free boundary inverse mean curvature flow leading to the Penrose-type inequality in \cite{koerber2019riemannian}. The purpose of this short note is to present yet another proof based on the harmonic level set method mentioned above. 
As in \cite{bray2022harmonic}, a key step in the proof is to pass from $M$ to $M_{\rm ext}$, the {\em exterior region} of $M$; for a precise description of this construction, see the discussion surrounding Theorem \ref{ext:reg} below. Also, we set $\Sigma_{\rm ext}=M_{\rm ext}\cap \Sigma$, the {\em boundary exterior region}. By using the analytical machinery developed in \cite{almaraz2014positive}, we are able to find a suitable harmonic function $u:M_{\rm ext}\to\mathbb R$ which will mediate between the energy densities and the integrands in (\ref{def:mass:abdl:2}); see Proposition \ref{anal:mach}. More precisely, after plugging this function into the integral inequality established in \cite[Proposition 4.2]{bray2022harmonic}, we obtain our main result, an explicit lower bound for the mass; compare with \cite[Theorem 1.2]{bray2022harmonic}. 
	
	\begin{theorem} \label{thm:main}
	If $(M^3,g)$ is an asymptotically flat manifold with a non-compact boundary $\Sigma$  satisfying $R_g \geq 0$ and $H_g \geq   0$ then 
	\begin{equation}\label{mass:est}
		\mathfrak{m}_{(M,g)}\geq\frac{1}{16\pi}\int_{M_{\rm ext}}\left(\frac{|\nabla^2 u|^2}{|\nabla u|}+ R_g|\nabla u|\right)dV 
		+\frac{1}{8\pi}\int_{\Sigma_{\rm ext}}H_g|\nabla u| dA,
	\end{equation}
	where $\nabla$ and $\nabla^2$ denote the gradient and Hessian operators of $g$.
\end{theorem}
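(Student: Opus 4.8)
The plan is to carry over the harmonic level set scheme of \cite{bray2022harmonic}, inserting the modifications forced by the non-compact boundary $\Sigma$. First I would invoke Theorem \ref{ext:reg} to replace $M$ by its exterior region $M_{\rm ext}$: this leaves the mass unchanged, produces a compact \emph{minimal} interior frontier, and—most importantly—controls the topology of the level sets used below. Next I would call on Proposition \ref{anal:mach} to produce a harmonic function $u:M_{\rm ext}\to\mathbb{R}$ that is asymptotic to a coordinate function tangent to $\Sigma$, satisfies the homogeneous Neumann condition along $\Sigma_{\rm ext}$, and obeys a complementary boundary condition along the compact minimal frontier. The Neumann condition is the decisive structural input: it keeps $\nabla u$ tangent to $\Sigma$ and forces the regular level sets $\Sigma_t=u^{-1}(t)$ to meet $\Sigma_{\rm ext}$ orthogonally, so that $\partial\Sigma_t\subset\Sigma_{\rm ext}$.

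The analytic core is Stern's Bochner identity for $u$, which away from the critical set reads
\[
\Delta|\nabla u|=\frac{|\nabla^2 u|^2-\big|\nabla|\nabla u|\big|^2}{|\nabla u|}+\frac{\mathrm{Ric}(\nabla u,\nabla u)}{|\nabla u|}.
\]
Integrating this identity over $M_{\rm ext}$ and invoking the coarea formula passes to integrals over the level sets $\Sigma_t$. Rewriting $\mathrm{Ric}(\nu,\nu)$, with $\nu=\nabla u/|\nabla u|$, through the traced Gauss equation $2K_{\Sigma_t}=R_g-2\,\mathrm{Ric}(\nu,\nu)+H_t^2-|\mathrm{II}_t|^2$ trades the Ricci term for $R_g$, the intrinsic Gauss curvature $K_{\Sigma_t}$ of the level set, and second-order terms in $u$; combined through the refined Kato inequality with the Hessian term already present, these reorganize into the bulk integrand $\tfrac{|\nabla^2 u|^2}{|\nabla u|}+R_g|\nabla u|$ of \eqref{mass:est} plus a level-set term $\int_{\Sigma_t}K_{\Sigma_t}\,dA$. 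This is precisely the manipulation packaged in \cite[Proposition 4.2]{bray2022harmonic}.

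To convert the Gauss-curvature term into geometric data I would apply Gauss-Bonnet to each level set, now a surface with boundary,
\[
\int_{\Sigma_t}K_{\Sigma_t}\,dA+\int_{\partial\Sigma_t}\kappa_g\,ds=2\pi\chi(\Sigma_t),
\]
where the exterior-region construction guarantees that the relevant $\Sigma_t$ are disks, pinning down $\chi(\Sigma_t)$ and the sign of its contribution. Because $\Sigma_t$ meets $\Sigma$ orthogonally, the geodesic curvature of $\partial\Sigma_t$ in $\Sigma_t$ equals the $\tau\tau$-component $\mathrm{II}_\Sigma(\tau,\tau)$ of the second fundamental form of $\Sigma\hookrightarrow M$ along the unit tangent $\tau$ to $\partial\Sigma_t$, while the divergence-theorem boundary flux along $\Sigma_{\rm ext}$ supplies the complementary component $\mathrm{II}_\Sigma(\nu,\nu)$; together these complete the trace $H_g=\mathrm{II}_\Sigma(\tau,\tau)+\mathrm{II}_\Sigma(\nu,\nu)$ and produce the term $\tfrac{1}{8\pi}\int_{\Sigma_{\rm ext}}H_g|\nabla u|\,dA$. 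Finally, evaluating the remaining flux integrals on the coordinate hemisphere $S^2_{r,+}$ and the circle $S^1_r$ as $r\to\infty$ and matching them against the expansion \eqref{asym:exp:g} should reproduce exactly the two boundary integrals in \eqref{def:mass:abdl:2}, i.e.\ $16\pi\,\mathfrak m_{(M,g)}$.

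I expect the main obstacle to be the honest bookkeeping of the two \emph{new} boundary contributions and their signs. The subtlest point is the identity that assembles $H_g$ from the geodesic-curvature term of Gauss-Bonnet and the Neumann boundary flux: one must exploit the orthogonality and the tangency of $\nabla u$ to $\Sigma$ carefully enough that no stray terms survive and that $H_g\ge 0$ enters with the favorable sign. A second, more technical difficulty is the behavior near the critical set of $u$ and along the minimal frontier: a Sard-type argument is needed so that almost every $\Sigma_t$ is a regular disk, and one must check that the minimal surface (where $H=0$) contributes nonpositively, so that discarding it preserves the inequality. Ensuring that all these estimates are uniform in $t$—so that the coarea integration and the $r\to\infty$ limit may be interchanged—is where the real work lies.
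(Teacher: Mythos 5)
Your overall architecture (pass to $M_{\rm ext}$, pick a distinguished harmonic function, feed it into \cite[Proposition 4.2]{bray2022harmonic}, use Gauss--Bonnet on level sets, and recover the mass from a flux at infinity) matches the paper, but you have inverted the decisive structural choice, and you attribute the inversion to a proposition that states the opposite of what you need. Proposition \ref{anal:mach} does \emph{not} produce a harmonic function with a homogeneous Neumann condition along $\Sigma_{\rm ext}$ asymptotic to a coordinate tangent to $\Sigma$: it produces $u=x_3'$ with the \emph{Dirichlet} condition $u=0$ on $\Sigma_{\rm ext}$, with the Neumann condition only on the compact minimal frontier $\mathcal{S}^{\rm cl}\sqcup\mathcal{D}^{\rm fb}$, and asymptotic to $x_3$, the coordinate \emph{normal} to $\Sigma$ at infinity. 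Consequently, in the paper's proof $\Sigma_{\rm ext}$ is itself the zero level set of $u$; the regular level sets $u^{-1}(t)$, $t>0$, never touch $\Sigma_{\rm ext}$, there is no orthogonal intersection with $\Sigma$, no corners along $\Sigma$, and no decomposition $H_g=\mathrm{II}_\Sigma(\tau,\tau)+\mathrm{II}_\Sigma(\nu,\nu)$ anywhere. The boundary term in \eqref{mass:est} arises instead from the elementary flux identity $\partial|\nabla u|/\partial\nu=-H_g|\nabla u|$, valid on $D_L^0\subset u^{-1}(0)$, after applying Proposition \ref{integral} with $P_1=\mathcal{S}^{\rm cl}\sqcup\mathcal{D}^{\rm fb}$ (where $H_{P_1}=0$, so that term drops) and $P_2=C_L\cup D_L^0$. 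So the function on which your whole argument rests is simply not supplied by the result you cite.

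To be fair, your Neumann/tangential variant is not intrinsically unworkable: it is the free-boundary version of Stern's method, and taking $P_1\supset\Sigma_{\rm ext}$ in Proposition \ref{integral} would make the $H_{P_1}$-term deliver $\int_{\Sigma_{\rm ext}}H_g|\nabla u|\,dA$ with the correct relative weight. But then you owe two things the proposal does not contain: (i) an existence-and-asymptotics theorem for a harmonic function satisfying Neumann conditions on all of $\partial M_{\rm ext}$ with $u-x_1\to 0$ (the machinery of \cite{almaraz2014positive} invoked in Proposition \ref{anal:mach} is set up for the Dirichlet problem used in the paper); and (ii) the entire asymptotic bookkeeping showing that the flux terms converge to $16\pi\,\mathfrak{m}_{(M,g)}$. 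In the paper that computation is the bulk of the proof (equations \eqref{mass CL}, \eqref{eq1}, \eqref{eq2} and \eqref{kappa}), and in your setup it would be genuinely different and harder: your level sets are half-disks whose boundaries pass from $\Sigma_{\rm ext}$ to the far boundary through corners, so Gauss--Bonnet acquires corner-angle contributions, and recovering precisely the circle integral $\int_{S^1_r}g_{\alpha 3}\vartheta^\alpha\,ds$ in \eqref{def:mass:abdl:2} is exactly the point at issue. Your phrase ``should reproduce exactly the two boundary integrals'' is where the theorem actually lives; as written, the proposal is a plausible program with two genuine gaps, not a proof.
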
  

The proof of Theorem \ref{thm:main}, with an argument showing how it implies Theorem \ref{pmt:res}, is presented in Section \ref{proof}. 
	
	\section{The proof of Theorem \ref{thm:main}}\label{proof}
	
	Let $(M,g)$ be an oriented Riemannian $3$-manifold with a non-compact boundary $\Sigma$ and suppose that $(M,g)$ is asymptotically flat as in Definition \ref{def:mass:abdl}.
	As in the boundaryless case treated in \cite{bray2022harmonic}, the first step in the proof of Theorem \ref{thm:main} involves cutting out from $M$ a so-called {\em trapped region} $\mathcal T\subset M$. In our case, this construction has been carried out in \cite[Lemma 2.3]{koerber2019riemannian}, as we now recall. We start by considering the closure $\Gamma$ of the union of all smooth, immersed free boundary (with respect to $\Sigma$)
	and closed minimal surfaces, which turns out to be a compact subset of $M$. We then define $\mathcal T$ to be the union of $\Gamma$ and all compact components of $M\backslash \Gamma$, which is compact as well. Finally, we set $M_{\rm ext}$ to be the metric completion of $M\backslash \mathcal T$.  
	
	\begin{theorem}\label{ext:reg}\cite[Lemma 2.3]{koerber2019riemannian}
		Let $(M,g)$ be an asymptotically flat manifold with a connected non-compact boundary and assume that $R_g\geq 0$ and $H_g\geq 0$ everywhere. Then
		$M_{\rm ext}$ does not
		contain any other immersed minimal surfaces (free boundary or closed) and has the topology of a
		half-space with finitely many solid balls removed. Moreover, $\partial M_{\rm ext}\backslash \Sigma$ consists of finitely
		many free boundary minimal discs and closed minimal spheres.
		\end{theorem}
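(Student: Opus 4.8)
The plan is to adapt the construction of the exterior region from the boundaryless setting (as in \cite{huisken2001, bray2022harmonic}) to the free boundary context, which is precisely what is carried out in \cite{koerber2019riemannian}. All three assertions---absence of interior minimal surfaces, the topological classification, and the description of $\partial M_{\rm ext}\backslash\Sigma$---rest on two ingredients: a confinement principle keeping every minimal surface inside a fixed compact region, and the regularity and compactness theory for (free boundary) stable minimal surfaces in a $3$-manifold with $R_g\geq 0$ and $H_g\geq 0$.

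First I would show that every smooth, immersed minimal surface (free boundary or closed) lies in a fixed compact set. In the asymptotically flat half-end the coordinate hemispheres $S^2_{r,+}$ for large $r$ are, up to the $O(r^{-\tau})$ error in \eqref{asym:exp:g}, strictly mean-convex with respect to the inward normal and meet $\Sigma$ asymptotically orthogonally; a touching/maximum principle argument then prevents any minimal surface from entering the far asymptotic region. Hence the union of all such surfaces has compact closure $\Gamma$, and interior and free boundary curvature estimates for stable minimal surfaces promote this to honest compactness. By construction $\mathcal T=\Gamma\cup\{\text{compact components of } M\backslash\Gamma\}$ is compact and $M_{\rm ext}$ is the metric completion of $M\backslash\mathcal T$; any minimal surface meeting the interior of $M_{\rm ext}$ would belong to $\Gamma\subset\mathcal T$, a contradiction. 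This settles the first assertion.

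Next I would identify $\partial M_{\rm ext}\backslash\Sigma$ with the outermost minimal surfaces furnished by the construction and pin down their geometry. Interior regularity together with free boundary regularity along $\Sigma$ shows that these are smooth, embedded, two-sided (since $M$ is oriented) stable minimal surfaces, the free boundary ones meeting $\Sigma$ orthogonally and the closed ones lying in the interior. The topological restriction is the crux: inserting the constant test function into the second variation, integrating by parts and substituting the Gauss equation turns the stability inequality into a bound involving $R_g$ over the surface and, through the boundary term, the mean curvature $H_g$ of $\Sigma$. Comparing this with the Gauss--Bonnet theorem with boundary and using $R_g\geq 0$, $H_g\geq 0$ forces each closed component to be a sphere and each free boundary component to be a disc, positive genus or nontrivial boundary topology being ruled out by the sign of the Euler characteristic.

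Finally, finiteness follows from a monotonicity/packing argument: confinement to a region of bounded geometry gives a uniform area bound, while the usual monotonicity formula gives a positive lower area bound, so only finitely many components occur. The global topology is then obtained by filling back the compact regions cut off by these outermost surfaces---each closed minimal sphere bounding a ball and each free boundary disc a half-ball, the identification as genuine (half-)balls using the resolution of the Poincar\'e conjecture---so that $M_{\rm ext}$ is diffeomorphic to a half-space with finitely many disjoint solid balls removed. The main obstacle I anticipate is the free boundary regularity and compactness theory near $\Sigma$ together with the careful bookkeeping in the stability and Gauss--Bonnet computation that determines the topology of the boundary pieces; since all of this is already available in \cite{koerber2019riemannian}, the most economical route in the present note is to invoke that reference directly.
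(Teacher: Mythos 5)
The paper offers no proof of this statement: Theorem \ref{ext:reg} is quoted directly from \cite[Lemma 2.3]{koerber2019riemannian}, with only the construction of the trapped region $\mathcal T$ and of $M_{\rm ext}$ recalled beforehand. Since your proposal ultimately falls back on invoking that same reference, your operative argument coincides with the paper's, and your outline of the confinement step (mean-convex barriers plus the maximum principle) and of the stability-plus-Gauss--Bonnet step forcing the boundary components to be minimal spheres and free boundary discs is a faithful sketch of the cited proof, which adapts the exterior region lemma of \cite{huisken2001} (Lemma 4.1 there) to the free boundary setting.

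One step of your sketch, however, is genuinely wrong as stated: the global topology is not obtained by ``filling back the compact regions cut off by these outermost surfaces''. Inside $M$, those regions are components of the trapped region $\mathcal T$, and they need not be balls or half-balls at all---nontrivial topology can be hidden behind the horizons (the $\mathbb{RP}^3$-Schwarzschild quotient is the standard example), which is precisely why the theorem concerns $M_{\rm ext}$ rather than $M$. The actual argument runs in the opposite direction: having first shown that $M_{\rm ext}$ contains no immersed minimal surface, closed or free boundary, one applies Meeks--Simon--Yau-type area minimization in isotopy classes to conclude that any essential sphere, essential disc, or other nontrivial topology inside $M_{\rm ext}$ would produce exactly such a minimal surface, a contradiction; the identification of $M_{\rm ext}$ with a half-space minus finitely many solid balls then follows from standard $3$-manifold topology. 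In particular, the Poincar\'e conjecture cannot serve as the engine here, since the simple connectivity of $M_{\rm ext}$ (Remark \ref{simply}) is itself a consequence of the theorem, not an available input.
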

	
	\begin{remark}\label{simply}
		It follows that $M_{\rm ext}$ is simply connected and hence the Jordan-Brower separation property holds for any closed, embedded surface in $M_{\rm ext}$. 
		\end{remark}
		
		It follows from Theorem \ref{ext:reg} that 
		\[
		\partial M_{\rm ext}=\Sigma_{\rm ext}\sqcup 
		\mathcal S^{\rm cl}\sqcup \mathcal D^{\rm fb},
		\]
		where $\Sigma_{\rm ext}=\partial M_{\rm ext}\cap \Sigma$ is the {\em boundary exterior region}, 
		$\mathcal S^{\rm cl}=\sqcup_{j=1}^N S^{\rm cl}_j$ and $\mathcal D^{\rm fb}=\sqcup_{k=1}^{N'} D^{\rm fb}_k$, with
		each $ S^{\rm cl}_j$ and $D^{\rm fb}_k$ being a closed minimal surface and a free boundary minimal disk, respectively. Note that 
		$\Sigma_{\rm ext}\sqcup \mathcal D^{\rm fb}$ is connected and has the topology of $\mathbb R^2$. 
		We now fix an asymptotically flat coordinate system
	$x=(x_1,x_2,x_3)$ on $M_{\rm ext}$ so that near infinity $\Sigma_{\rm ext}$ is defined by $x_3=0$. In what follows, we denote by $\nu$ the outward unit normal vector field to $\Sigma_{\rm ext}$, which is then extended to $\partial M_{\rm ext}\backslash \Sigma_{\rm ext}$ in the obvious manner.  
	
	\begin{proposition}\label{anal:mach}
	Under the conditions above there exists	
	 a function $x'_3:M_{\rm ext}\to \mathbb R$ such that
	\begin{equation}\label{harmon}
		\left\{
		\begin{array}{rcl}
			\Delta x'_3&=&0\quad\mbox{in}\quad M_{\rm ext}\\
			x'_3&=& 0 \quad\mbox{on}\quad\Sigma_{\rm ext}\\
			\partial x_3'/\partial \nu& = & 0 \quad \mbox{on}\quad \mathcal S^{\rm cl}\sqcup \mathcal D^{\rm fb}
		\end{array}
		\right.
	\end{equation} 
		and
	\begin{equation}\label{infin}
		x'_3-x_3\in C^{2,\alpha}_{-\tau+1+\epsilon}(M),\quad \epsilon>0,
	\end{equation}
	as $r\to+\infty$.
	Moreover, $|\nabla x_3'|>0$ along $\Sigma_{\rm ext}$.
		\end{proposition}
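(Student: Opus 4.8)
The plan is to realize $x_3'$ as a harmonic correction of the coordinate function $x_3$ and to solve the resulting mixed boundary value problem in weighted Hölder spaces, using the linear elliptic theory for asymptotically flat manifolds with non-compact boundary developed in \cite{almaraz2014positive}. First I would fix a smooth function $\zeta:M_{\rm ext}\to\mathbb R$ that agrees with $x_3$ outside a compact set, vanishes on $\Sigma_{\rm ext}$, and satisfies $\partial\zeta/\partial\nu=0$ on the compact minimal pieces $\mathcal S^{\rm cl}\sqcup\mathcal D^{\rm fb}$. This is possible because near infinity $\Sigma_{\rm ext}$ coincides with $\{x_3=0\}$, so the prescribed values are already mutually compatible there, and the remaining adjustments occur on a compact region; near the edges where the free boundary discs meet $\Sigma_{\rm ext}$ the two conditions are compatible because the discs meet $\Sigma$ orthogonally. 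Writing $x_3'=\zeta+w$, problem \eqref{harmon} becomes
\begin{equation*}
\Delta_g w=-\Delta_g\zeta\ \text{in}\ M_{\rm ext},\qquad w=0\ \text{on}\ \Sigma_{\rm ext},\qquad \partial w/\partial\nu=0\ \text{on}\ \mathcal S^{\rm cl}\sqcup\mathcal D^{\rm fb},
\end{equation*}
and a direct computation from \eqref{asym:exp:g}, using $\partial_j x_3=\delta_{3j}$, gives $\Delta_g\zeta=O(r^{-\tau-1})$ near infinity, so the right-hand side lies in $C^{0,\alpha}_{-\tau-1+\epsilon}(M_{\rm ext})$ for every $\epsilon>0$.

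Next I would invoke the weighted Fredholm theory for the Laplacian under these mixed boundary conditions. The operator $\Delta_g$, acting from the subspace of $C^{2,\alpha}_{\delta}(M_{\rm ext})$ cut out by the homogeneous Dirichlet condition on $\Sigma_{\rm ext}$ and the homogeneous Neumann condition on $\mathcal S^{\rm cl}\sqcup\mathcal D^{\rm fb}$ into $C^{0,\alpha}_{\delta-2}(M_{\rm ext})$, is Fredholm for every non-exceptional weight $\delta$. I would choose $\delta=-\tau+1+\epsilon$ with $\epsilon>0$ small enough that $\delta$ is non-exceptional and below the first growth mode (the slowest growing admissible harmonic being $\sim x_3$, so $\delta<1$ is what guarantees injectivity). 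Injectivity then follows because a homogeneous solution must in fact decay at infinity, whence integration by parts—using that the Dirichlet and Neumann conditions annihilate the boundary integrals over $\partial M_{\rm ext}$ while the decay kills the flux at infinity—forces $\int_{M_{\rm ext}}|\nabla w|^2\,dV=0$, so $w$ is constant and, vanishing on $\Sigma_{\rm ext}$, identically zero; surjectivity follows from the index being zero and the analogous triviality statement for the dual weight. This yields a unique $w\in C^{2,\alpha}_{-\tau+1+\epsilon}(M_{\rm ext})$ and hence \eqref{harmon}--\eqref{infin}. The main obstacle is precisely this solvability step: one must make sure the analytic framework of \cite{almaraz2014positive} genuinely covers the mixed Dirichlet--Neumann problem, including the corners where the free boundary discs meet $\Sigma_{\rm ext}$, where it is the orthogonality of the intersection (interior angle $\pi/2$) that secures $C^{2,\alpha}$ regularity up to the edge, typically via a reflection argument.

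Finally, for the assertion $|\nabla x_3'|>0$ along $\Sigma_{\rm ext}$ I would argue by positivity together with the Hopf lemma. Near infinity \eqref{infin} gives $\nabla x_3'=\nabla x_3+O(r^{-\tau+\epsilon})$, so $|\nabla x_3'|\to1$ and the claim is immediate in the asymptotic region. For the compact part I would first establish $x_3'\ge0$: since $w$ vanishes on $\Sigma_{\rm ext}\cap\{r\gg1\}=\{x_3=0\}$ and $\nabla w=O(r^{-\tau+\epsilon})$, integrating in the $x_3$–direction yields $|w|\le C\,x_3\,r^{-\tau+\epsilon}$, whence $x_3'=x_3+w\ge x_3\,(1-Cr^{-\tau+\epsilon})\ge0$ for $r$ large. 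Applying the maximum principle on the exhaustion $M_{\rm ext}\cap\{r\le R\}$—whose minimum can lie neither in the interior (strong maximum principle) nor, by the Hopf lemma, on the Neumann pieces, and is therefore attained on $\Sigma_{\rm ext}$ or on the outer hemisphere, where $x_3'\ge0$—gives $x_3'\ge0$ throughout, and then $x_3'>0$ in the interior by the strong maximum principle, since $x_3'\not\equiv0$. Because $x_3'\equiv0$ on $\Sigma_{\rm ext}$ forces its tangential gradient to vanish, on $\Sigma_{\rm ext}$ we have $\nabla x_3'=(\partial x_3'/\partial\nu)\,\nu$, and the Hopf lemma gives $\partial x_3'/\partial\nu<0$, hence $|\nabla x_3'|>0$, as claimed.
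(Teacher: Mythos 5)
Your overall architecture (harmonic correction $x_3'=\zeta+w$, weighted solvability, Hopf lemma for the last claim) is reasonable, and your endgame agrees with the paper's, which also deduces $|\nabla x_3'|>0$ on $\Sigma_{\rm ext}$ from the maximum principle. But there is a genuine gap at exactly the step you yourself flag as ``the main obstacle'': the Fredholm solvability and boundary regularity of the \emph{mixed} Dirichlet--Neumann problem on $M_{\rm ext}$. The machinery of \cite{almaraz2014positive} that you invoke is a weighted elliptic theory for a boundary value problem on an asymptotically flat manifold whose only boundary is the non-compact one; it is applied in the paper to a pure Dirichlet problem, and it does not cover mixed conditions, let alone the corner circles $\partial D^{\rm fb}_k=\mathcal D^{\rm fb}\cap\Sigma_{\rm ext}$ where the free boundary disks meet $\Sigma_{\rm ext}$. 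Asserting Fredholmness ``for every non-exceptional weight'' for this corner problem, with $C^{2,\alpha}$ regularity up to the edge obtained ``typically via a reflection argument,'' is precisely what needs proof, and the local reflection you gesture at is subtler than you indicate: the pieces $\mathcal S^{\rm cl}\sqcup\mathcal D^{\rm fb}$ are only \emph{minimal}, not totally geodesic, so reflecting across them produces a metric that is merely Lipschitz, and one must check that minimality still makes the coefficients of the reflected Laplacian Lipschitz before any Schauder theory can be applied. Note also that your exclusion of a negative minimum on the Neumann faces via the Hopf lemma already presupposes $C^1$ regularity of $w$ up to those faces, so this gap propagates into your positivity argument as well.

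The paper closes this gap with one global idea that your proposal is missing: it \emph{doubles} $M_{\rm ext}$ across $\mathcal S^{\rm cl}\sqcup\mathcal D^{\rm fb}$, obtaining an asymptotically flat manifold $(\widehat M,\widehat g)$ with non-compact boundary $\widehat\Sigma$ and two half-ends, on which $\widehat g$ is Lipschitz and — precisely because the doubling locus is minimal — the coefficients of its Laplacian remain Lipschitz. On the double, the problem is a \emph{pure} Dirichlet problem, which is the setting of \cite[Proposition 3.8]{almaraz2014positive}; this produces $\widehat x_3'$ with $\widehat x_3'|_{\widehat\Sigma}=0$ and the asymptotics (\ref{infin}), and by uniqueness $\widehat x_3'$ is invariant under the reflection involution, so its normal derivative vanishes on the fixed-point set and the restriction $x_3'=\widehat x_3'|_{M_{\rm ext}}$ satisfies all of (\ref{harmon}) with no separate corner analysis. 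In short, the reflection you propose to use locally as a regularity device is used by the paper globally as the existence device; doing it globally is what allows the existing Dirichlet machinery to be quoted verbatim. Salvaging your direct approach would require developing the weighted Schauder/Fredholm theory for right-angle Dirichlet--Neumann corners with the attendant Lipschitz-coefficient issues, which amounts to re-deriving the doubling argument.
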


		\begin{proof}
			We refer to \cite[Section 3]{almaraz2014positive} and \cite[Appendix C]{eichmair2023doubling} for the precise definition of the weighted H\"older space in (\ref{infin}). We adapt the argument outlined in the proof of \cite[Proposition 46]{eichmair2023doubling}, which relies on the analytical machinery developed in \cite{almaraz2014positive}. We first double $M_{\rm ext}$ across $\mathcal S^{\rm cl}\sqcup \mathcal D^{\rm fb}$ so as to form an asymptotically flat manifold $(\widehat M,\widehat g)$ with a non-compact boundary  $\widehat\Sigma$ and two half-ends on which a natural extension of $x_3$, say $\widehat x_3$, is defined. 
			We note that $\widehat g$ is smooth away from $\mathcal S^{\rm cl}\sqcup \mathcal D^{\rm fb}\hookrightarrow \widehat M$ and remains at least Lipschitz there. As a consequence, the coefficients of its Laplacian are at least Lipschitz as well (this uses that $\mathcal S^{\rm cl}\sqcup \mathcal D^{\rm fb}$ is formed by minimal surfaces).
			Using \cite[Proposition 3.8]{almaraz2014positive}, we may find a harmonic function $\widehat x_3':\widehat M\to\mathbb R$ satisfying  $\widehat x_3'|_{\widehat\Sigma}=0$ and such that $\widehat x_3-\widehat x'_3\in C^{2,\alpha}_{-\tau+1+\epsilon}(\widehat M)$. It is immediate that $x_3':=\widehat x_3'|_{M_{\rm ext}}$ meets all the conditions in (\ref{harmon}). Finally, the last assertion follows from the maximum principle. 
		\end{proof}

	It follows from (\ref{infin}) that $x'=(x_1',x_2',x_3')=(x_1,x_2,x_3')$ is an asymptotically flat coordinate system as well.
	For all $L>0$ large enough we set
		\begin{eqnarray*}
			D^+_L&=&\{x'_3=L; (x'_1)^2+(x'_2)^2\leq L^2\}\\
			T_L&=&\{0\leq x'_3\leq L; (x'_1)^2+(x'_2)^2= L^2\}\\
			D^0_L&=&\{x'_3=0; (x'_1)^2+(x'_2)^2\leq L^2\},
	\end{eqnarray*} 
	so that the coordinate half-cylinder $C_L=D^+_L\cup T_L$ lies in the asymptotic region.   
	Now, essentially the same argument leading to the geometric invariance of the mass in \cite{almaraz2014positive} implies that
	\begin{equation}\label{mass CL}
		\mathfrak{m}_{(M,g)} =\lim\limits_{L\rightarrow+\infty}\frac{1}{16\pi}\left\{\int_{C_L}\sum_i(g_{ji,i}-g_{ii,j})\nu^jdA +\int_{\partial D^0_L}g_{\alpha,3}\vartheta^{\alpha}ds\right\},
	\end{equation}
where here $\nu$ is the outward unit normal to $C_L=D^+_L\cup T_L$ and $\vartheta$ is the outward pointing unit co-normal to $\partial D^0_L$, oriented as the boundary of the bounded region $D^0_L\subset \Sigma$; this also follows from the general argument in \cite{michel2011geometric}. 

Another key ingredient in the sequel is the following integral inequality.
	
	\begin{proposition}\cite[Proposition 4.2]{bray2022harmonic}\label{integral}
		Let $(\Omega^3,g)$ be a 3-dimensional oriented compact Riemannian manifold with boundary $\partial\Omega=P_1\bigsqcup P_2$ and let  $u:\Omega\rightarrow\mathbb{R}$
		be a harmonic function satisfying $\partial u/\partial \nu=0$ on $P_1$ and $|\nabla_g u| > 0$ on $P_2$, where $\nu$ is outward unit normal to $\partial\Omega$. Then
		\begin{equation*}
			\int_{\underline{u}}^{\overline{u}}\int_{\Sigma_t}\frac{1}{2}\left(\frac{|\nabla^2 u|^2}{|\nabla u|^2}+ R_g-R_{\Sigma^u_t}+\int_{\partial\Sigma_t^u\cap P_1}H_{P_1}\right)dA_tdt 
			\leq  \int_{P_2}
			\frac{\partial|\nabla u|}{\partial\nu} dV, 
		\end{equation*}
		where $\Sigma_t^u=u^{-1}(t)$ with $t\in(\underline{u},\overline{u})$, the range of $u$, $dA_t$ is its area element, $R_{\Sigma^u_t}$ is the scalar curvature of $\Sigma^u_t$ and $H_{P_1}$ is the mean curvature of $P_1$. 
	\end{proposition}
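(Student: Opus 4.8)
The plan is to derive the inequality from the Bochner formula for the harmonic function $u$, combined with the intrinsic geometry of its regular level sets and the co-area formula. First I would record the pointwise Bochner identity $\frac12\Delta|\nabla u|^2=|\nabla^2 u|^2+\mathrm{Ric}(\nabla u,\nabla u)$, valid since $\Delta u=0$, and rewrite it away from the critical set $\{|\nabla u|=0\}$ as a formula for $\Delta|\nabla u|$. On a regular level set $\Sigma_t^u$ I would choose the adapted orthonormal frame whose normal is $N=\nabla u/|\nabla u|$ and decompose $|\nabla^2 u|^2$ into its tangential-tangential, tangential-normal and normal-normal parts. Because $u$ is harmonic, the tangential trace of $\nabla^2 u$ is $-|\nabla u|$ times the mean curvature of $\Sigma_t^u$, the tangential-normal block is exactly the tangential gradient of $|\nabla u|$ along $\Sigma_t^u$, and the normal-normal entry is $N(|\nabla u|)$; this produces the refined Kato-type identity in which the second fundamental form $A$ of the level set appears explicitly.

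The genuinely three-dimensional input enters next. I would invoke the traced Gauss equation for the surface $\Sigma_t^u\hookrightarrow\Omega^3$, namely $2\,\mathrm{Ric}(N,N)=R_g-R_{\Sigma_t^u}+H^2-|A|^2$, in order to trade the Ricci term in Bochner for the ambient and intrinsic scalar curvatures. Substituting the frame decomposition together with the Gauss equation into Bochner produces, on each regular level set, the clean pointwise identity $\dfrac{\Delta|\nabla u|}{|\nabla u|}=\dfrac12\Big(\dfrac{|\nabla^2 u|^2}{|\nabla u|^2}+R_g-R_{\Sigma_t^u}\Big)$. I would then multiply by $|\nabla u|$, integrate over $\Omega$, and apply the co-area formula $\int_\Omega f\,dV=\int_{\underline{u}}^{\overline{u}}\int_{\Sigma_t^u}\frac{f}{|\nabla u|}\,dA_t\,dt$ to the right-hand side, which reproduces exactly the level-set integrand of the statement, while the left-hand side becomes $\int_{\partial\Omega}\partial|\nabla u|/\partial\nu\,dV$ by the divergence theorem.

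It then remains to analyze the boundary $\partial\Omega=P_1\sqcup P_2$. On $P_2$ I keep the conormal term, which is precisely the right-hand side of the asserted inequality. On $P_1$ the Neumann condition $\partial u/\partial\nu=0$ forces $\nabla u$ to be tangent to $P_1$, so that $N\in TP_1$ and $\nu\in T\Sigma_t^u$; in particular $\Sigma_t^u$ meets $P_1$ orthogonally. Differentiating the relation $\langle\nabla u,\nu\rangle=0$ along $P_1$ shows that the conormal derivative $\partial|\nabla u|/\partial\nu=\nabla^2 u(\nu,N)$ is governed by the second fundamental form of $P_1$; assembling the normal contribution with the geodesic curvature of the level curves $\partial\Sigma_t^u\cap P_1$ inside $\Sigma_t^u$, which by the orthogonal intersection equals the complementary component of the second fundamental form of $P_1$, recovers the mean-curvature term $H_{P_1}$ recorded in the statement, after a final application of the co-area formula along $P_1$.

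The main obstacle is not any of these differential-geometric identities but the behaviour at the critical set of $u$, where $|\nabla u|$ is merely Lipschitz and the level sets are singular, so that neither the pointwise identity nor the divergence theorem applies naively. I would handle this in the standard Stern fashion: by Sard's theorem almost every $t$ is a regular value, so the level-set integrals are defined for a.e.\ $t$, and I would run the computation with $|\nabla u|$ replaced by the smooth regularization $\sqrt{|\nabla u|^2+\epsilon}$, for which Bochner yields an inequality rather than an identity, with an $\epsilon$-dependent remainder of favourable sign. Letting $\epsilon\to0$ and controlling the contribution of a shrinking neighbourhood of the critical set, across which the relevant quantities remain integrable, is what converts the equality into the asserted inequality. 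This limiting and regularization step, together with the verification that no boundary mass escapes at $\partial\Sigma_t^u$ as $t$ ranges over regular values, is where the real work lies.
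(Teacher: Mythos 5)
This proposition is not proved in the paper at all --- it is quoted from \cite{bray2022harmonic} --- so your attempt can only be measured against the Stern-type argument given there, and your outline is indeed that argument. The interior part of your proposal is correct: for harmonic $u$ the Bochner formula, the adapted-frame (refined Kato) decomposition of $\nabla^2 u$, and the traced Gauss equation combine to give, on regular level sets, the identity $\Delta|\nabla u|=\frac{|\nabla u|}{2}\left(\frac{|\nabla^2u|^2}{|\nabla u|^2}+R_g-R_{\Sigma_t}\right)$, and the coarea formula plus the divergence theorem, run through the regularization $\sqrt{|\nabla u|^2+\epsilon^2}$ so that the critical set contributes with a favorable sign, convert this into an inequality between the level-set integral and the total flux $\int_{\partial\Omega}\partial_\nu|\nabla u|\,dA$.

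The gap is in your treatment of the $P_1$ flux. Differentiating $\langle\nabla u,\nu\rangle=0$ tangentially along $P_1$ gives $\partial_\nu|\nabla u|=\nabla^2u(\nu,N)=-|\nabla u|\,\mathrm{II}_{P_1}(N,N)$, where $N=\nabla u/|\nabla u|$, and the coarea formula on $P_1$ turns $\int_{P_1}\partial_\nu|\nabla u|\,dA$ into $-\int dt\oint_{\partial\Sigma_t^u\cap P_1}\mathrm{II}_{P_1}(N,N)\,ds$: only the $(N,N)$-component of the second fundamental form appears, never the full trace. Your plan to ``assemble'' this with the geodesic curvature of $\partial\Sigma_t^u\cap P_1$ inside $\Sigma_t^u$ (which, by orthogonality, indeed equals $\mathrm{II}_{P_1}(T,T)$) is not available in your bookkeeping: that geodesic curvature term only arises if one applies Gauss--Bonnet to $\int_{\Sigma_t}R_{\Sigma_t}$, replacing it by $4\pi\chi(\Sigma_t)-2\oint_{\partial\Sigma_t}\kappa$, whereas the inequality you set out to prove retains $R_{\Sigma_t}$ in full. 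You cannot keep the untouched $-R_{\Sigma_t}$ \emph{and} claim the boundary term $+\oint H_{P_1}$; the two versions differ by $\oint_{\partial\Sigma_t^u\cap P_1}\mathrm{II}_{P_1}(T,T)\,ds$, which has no sign in general. In fact, read literally with the outward-normal convention, the displayed inequality is false: for $\Omega$ the flat solid cylinder $\{x_1^2+x_2^2\le a^2,\ 0\le x_3\le 1\}$ with $u=x_3$, $P_1$ the lateral boundary and $P_2$ the two disks, every curvature and Hessian term vanishes, yet the mean-curvature integral makes the left side a positive multiple of $2\pi$ while the right side is zero. What is actually provable --- and what the application in this paper needs, since there $P_1$ is minimal and the subsequent Gauss--Bonnet step produces exactly the combination $\mathrm{II}_{P_1}(N,N)+\kappa=H_{P_1}=0$ --- is the version with $\mathrm{II}_{P_1}(N,N)$ in place of $H_{P_1}$, or equivalently the version stated after Gauss--Bonnet, with $2\pi\chi(\Sigma_t)$ and the geodesic curvature along $\partial\Sigma_t^u\cap P_2$ replacing $R_{\Sigma_t}$. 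Your write-up should prove one of these two forms; as it stands, the final assembly invokes a geodesic curvature term that your computation never produced.
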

		
		We may now start the proof of Theorem \ref{thm:main}. We apply Proposition \ref{integral} to $u=x'_3$ as in Proposition \ref{anal:mach}, $\Omega=\Omega_L\subset M$, the closure of the bounded component of $M_{\rm ext}\backslash S_L$, 
		where $S_L:=C_L\cup D_L^0\cup\mathcal S^{\rm cl}\cup \mathcal D^{\rm fb}$, 
		$\Sigma_t^L=\Omega_L\cap u^{-1}(t)$, 
		$P_1=\mathcal S^{\rm cl}\sqcup \mathcal D^{\rm fb}$, a disjoint union of minimal spheres and disks, and $P_2=C_L\cup D_L^0$. From Theorem \ref{ext:reg} we 
		find that 
		\begin{eqnarray*}
				\frac{1}{2}\int_{\Omega_L}\left(\frac{|\nabla^2 u|^2}{|\nabla u|}+ R_g|\nabla u|\right)dV 
				&\leq &\int_0^L\left(2\pi\chi(\Sigma_t^L)-\int_{\Sigma_t^L\cap T_L}
				\kappa_{t,L}\right)dt\\
				&& \quad +\int_{C_L}
				\frac{\partial|\nabla u|}{\partial\nu} dA+\int_{D^0_L}
				\frac{\partial|\nabla u|}{\partial\nu} dA, 
		\end{eqnarray*} 
		where $\chi$ denotes Euler characteristic,
	$\kappa_{t,L}$ is the geodesic curvature of the curve $\Sigma_t^L\cap T_L$ and we have used Gauss-Bonnet formula.
		
		We now claim that for any $t\in (0,L)$ a regular value of $u$,  $\Sigma_t^L$ has only one connected component and intersects $T_L$ along the circle $\Sigma_t^L\cap T_L$. Otherwise, there exists a regular value $t\in (0,L)$ and a component $\Sigma'\subset \Sigma_t^L$ disjoint from $T_L$. 
		By Remark \ref{simply}, there exists a domain $U\subset \Omega_L$ disjoint from $T_L$ and such that $\partial U\backslash\partial M_{\rm ext}=\Sigma'$. It then follows that $u\equiv t$ on $U$, which contradicts the fact that $t$ is a regular value. Thus, $\chi(\Sigma_t^L)\leq 1$ and we see that  
			\begin{eqnarray*}
				\frac{1}{2}\int_{\Omega_L}\left(\frac{|\nabla^2 u|^2}{|\nabla u|}+ R_g|\nabla u|\right)dV 
				&\leq &2\pi L-\int_0^L\Big(\int_{\Sigma_t^L\cap T_L}
				\kappa_{t,L}\Big)dt\\
				&&\quad +\int_{C_L}
				\frac{\partial|\nabla u|}{\partial\nu} dA+\int_{D^0_L}
				\frac{\partial|\nabla u|}{\partial\nu} dA.
		\end{eqnarray*}
	
		Next, since that $D^0_L\subset\Sigma=u^{-1}(0)$ is a regular level set we deduce that
		\begin{eqnarray*}
		H_g & = &  
		{\rm div}\left(-\frac{\nabla u}{|\nabla u|}\right)\\
		& = & -\frac{\Delta u}{|\nabla u|}+|\nabla u|^{-2}g\left(\nabla|\nabla u|,\nabla  u\right),
	\end{eqnarray*}
	where $\mbox{div}$ is the divergence, 
	so that 
	\[
	H_g=|\nabla u|^{-2}g\left(\nabla|\nabla u|,\nabla  u\right)
	\]
	along $D^0_L$. It follows that
	\[
	\frac{\partial|\nabla u|}{\partial\nu}=-|\nabla u|H_g,
	\]
		which immediately gives 
		\begin{eqnarray}\label{m}
				\frac{1}{2}\int_{\Omega_L}\left(\frac{|\nabla^2 u|^2}{|\nabla u|} + R_g|\nabla u|\right)dV 
				& \leq & -\int_{D^0_L}H_g|\nabla u| dA +2\pi L-\nonumber\\
				&   & - \int_0^L\left(\int_{\Sigma_t^L\cap T_L}
				\kappa_{t,L}\right)dt+\int_{C_L}
				\frac{\partial|\nabla u|}{\partial\nu} dA. 
		\end{eqnarray}
		
		We now note that $\nabla u=g^{3j}\partial_j$, where $\partial_j=\partial_{x_j'}$, and 
		\[
		\nabla|\nabla u|=\nabla(g^{33})^{1/2}=-\frac{1}{2}\nabla g_{33} +O(|x'|^{-1-2\tau}).
		\] 
		Moreover, the outer normal $\nu$ to $C_L=D^+_L\cup T_L$ is given by
		\[
		\nu=
		\left\{
		\begin{array}{rcc}
			\partial_3 + O(|x'|^{-\tau}) & \mbox{on} & D_L^+ \\
				\frac{x'_1\partial_1+x'_2\partial_2}{L}+ O(|x'|^{-\tau}) & \mbox{in} 
				        & T_L.
			\end{array}
		\right.
		\]
		Hence,
		\begin{eqnarray}\label{eq1}
			\int_{C_L}\frac{\partial|\nabla u|}{\partial\nu}dA 
			& = & -\frac{1}{2}\int_{D_L}g(\nabla g_{33},\partial_3)dA-\frac{1}{2L}\int_{T_L}g(\nabla g_{33},x'_1\partial_1+x'_2\partial_2)dA+\nonumber\\
			& & \quad +O(L^{1-2\tau})\nonumber\\
			& = & -\frac{1}{2}\int_{D^+_L}g_{33,3}dA-\frac{1}{2L}\int_{T_L}(x'_1g_{33,1}+x'_2g_{33,2})dA+O(L^{1-2\tau}).	
		\end{eqnarray}
		On the other hand, as $u=x'_3$ is harmonic it is easy to check that{\setlength\arraycolsep{2pt}\begin{eqnarray*}
				0&=&g^{ij}g(\nabla_{\partial_i}\nabla u, \partial_j)\\
				&=&g^{ij}g^{3l}g(\nabla_{\partial_i}\partial_l, \partial_j)+g^{ij}g_{lj}\partial_ig^{3l}\\
				&=&\Gamma^j_{j3}+\partial_lg^{3l}+O(|x|^{-1-2\tau})\\
				&=&\frac{1}{2}(g_{11,3}+g_{22,3}+g_{33,3})-g_{31,1}-g_{32,2}-g_{33,3}+O(|x|^{-1-2\tau})
		\end{eqnarray*}}
		which can be rewritten
		$$
		g_{33,3}=-2g_{31,1}-2g_{32,2}+g_{11,3}+g_{22,3}+O(|x|^{-1-2\tau}).
		$$
		Combined with \eqref{eq1} this yields
			\begin{eqnarray*}
				\int_{C_L}\frac{\partial|\nabla u|}{\partial\nu}dA&=&\int_{D^+_L}(g_{32,2}+g_{31,1}-\frac{1}{2} g_{22,3}-\frac{1}{2} g_{11,3})dA\\
				& & -\quad \frac{1}{2L}\int_{T_L}(x'_1g_{33,1}+x'_2g_{33,2})dA+O(L^{1-2\tau})\\
				&=&\frac{1}{2}\int_{D^+_L}(g_{31,1}-g_{11,3}+{g_{32,2}}- g_{22,3})dA+\frac{1}{2}\int_{D^+_L}(g_{31,1}+g_{32,2})dA\\
				&&\quad -\frac{1}{2L}\int_{T_L}(x'_1g_{33,1}+x'_2g_{33,2})dA+O(L^{1-2\tau})\\
				&=&\frac{1}{2}\int_{D^+_L}(g_{ji,i}-g_{ii,j})\nu^jdA+\frac{1}{2}\int_{D^+_L}
				\mbox{div}(\partial_3) dA\\
				&&\quad -\frac{1}{2L}\int_{T_L}(x'_1g_{33,1}+x'_2g_{33,2})dA+O(L^{1-2\tau}),
		\end{eqnarray*}
	and using the divergence theorem on $D_L^+$ and the fundamental theorem of calculus on $T_L$  we obtain
			\begin{eqnarray*}
				\int_{C_L}\frac{\partial|\nabla u|}{\partial\nu}dA
				&=&\frac{1}{2}\int_{D^+_L}(g_{ji,i}-g_{ii,j})\nu^jdA+\frac{1}{2L}\int_{\partial D^+_L}(x'_1g_{13}+x'_2g_{23})ds\\
				&&\quad -\frac{1}{2L}\int_{T_L}(x'_1g_{33,1}+x'_2g_{33,2})dA+O(L^{1-2\tau})\\
				&=&\frac{1}{2}\int_{D^+_L}(g_{ji,i}-g_{ii,j})\nu^jdA+\frac{1}{2L}\int_{T_L}
				\partial_3\left(x'_1g_{13}+x'_2g_{23}\right)dA\\
				&&\quad +\frac{1}{2L}\int_{\partial D^0_L}(x'_1g_{13}+x'_2g_{23})ds
				-\frac{1}{2L}\int_{T_L}(x'_1g_{33,1}+x'_2g_{33,2})dA+\\
				&&\quad \quad O(L^{1-2\tau}).
		\end{eqnarray*}
		We thus end up with 
			\begin{eqnarray}\label{eq2}
				\int_{C_L}\frac{\partial|\nabla u|}{\partial\nu}dA
				&=&\frac{1}{2}\int_{D^+_L}(g_{ji,i}-g_{ii,j})\nu^jdA\\\nonumber
				&&\quad +\frac{1}{2L}\int_{T_L}\left[x'_1(g_{13,3}-g_{33,1})+x'_2(g_{23,3}-g_{33,2})\right]dA\\\nonumber
				&&\quad\quad +\frac{1}{2}\int_{\partial D^0_L}g_{\alpha,3}\nu^{\alpha}ds+O(L^{1-2\tau}).
		\end{eqnarray}
		On the other hand, we may use \cite[Lemma 6.2]{bray2022harmonic} to deduce that
		{\setlength\arraycolsep{2pt}
			\begin{eqnarray}\label{kappa}
				\int_0^L\left(\int_{\Sigma_t^L\cap T_L}\kappa_{t,L}\right)dt&=&\frac{1}{2L}\int_{T_L}\left[x'_1(g_{22,1}-g_{12,2})+x'_2(g_{11,2}-g_{21,1})\right]dA\\\nonumber
				&&\quad +2\pi L+O(L^{1-2\tau}+L^{-\tau}).
		\end{eqnarray}}
		Consequently, it follows from \eqref{eq2} and \eqref{kappa} that
		{\setlength\arraycolsep{2pt}
			\begin{eqnarray*}
				-\int_0^L \left(\int_{\Sigma_t^L\cap T_L}\kappa_{t,L}\right)dt&+&\int_{C_L}\frac{\partial|\nabla u|}{\partial\nu}dA
				=\frac{1}{2}\int_{D^+_L}(g_{ji,i}-g_{ii,j})\nu^jdA\\\nonumber
				&&+\frac{1}{2L}\int_{T_L}\left[x'_1(g_{13,3}-g_{33,1})+x'_2(g_{23,3}-g_{33,2})\right]dA\\\nonumber
				&&+\frac{1}{2L}\int_{T_L}\left[x'_1(g_{12,2}-g_{22,1})+x'_2(g_{21,1}-g_{11,2})\right]dA\\\nonumber
				&&-2\pi L+\frac{1}{2}\int_{\partial D^0_L}g_{\alpha,3}\nu^{\alpha}ds+o(1)\\
				&=&-2\pi L+\frac{1}{2}\int_{C_L}(g_{ji,i}-g_{ii,j})\nu^j dA+\frac{1}{2}\int_{\partial D^0_L}g_{\alpha,3}\nu^{\alpha}ds\\&&+o(1).
		\end{eqnarray*}}
		Therefore, if we plug this into \eqref{m} we obtain
			\begin{eqnarray*}
				\frac{1}{2}\left[\int_{C_L}(g_{ji,i}-g_{ii,j})\nu^jdA+\int_{\partial D^0_L}g_{\alpha,3}\nu^{\alpha}ds\right]+o(1) &\geq & \frac{1}{2}\int_{\Omega_L}\left(\frac{|\nabla^2 u|^2}{|\nabla u|}+ R_g|\nabla u|\right)dV\\
				&&\quad +\int_{D^0_L}H_g|\nabla u| dA.
		\end{eqnarray*}
		and taking $L\rightarrow+\infty$ we achieve  \eqref{mass:est}.

		We now check how Theorem \ref{thm:main} implies Theorem \ref{pmt:res}. Clearly, \eqref{mass:est} implies $\mathfrak m_{(M,g)}\geq 0$  under the stated energy conditions. Also, if the equality $\mathfrak{m}_{(M,g)}=0$ holds then it follows from \eqref{mass:est} that $\nabla^2u\equiv 0$. Since the second fundamental form $B_{\Sigma_{\rm ext}}$ of the embedding $\Sigma_{\rm ext}\hookrightarrow M$ is proportional to $\nabla^2u$, $\Sigma_{\rm ext}$ is totally geodesic. 
		We now consider the $3$-manifold $(\widetilde M_{\rm ext},\widetilde g)$ obtained by doubling $(M_{\rm ext},g)$ across $\Sigma_{\rm ext}$, which is asymptotically flat (as a boundaryless manifold) but possibly carries finitely many inner horizons coming from the minimal spheres and free boundary minimal disks composing $\partial M_{\rm text}\backslash \Sigma_{\rm ext}$.
		Note that this is the kind of ``prepared'' initial data set treated in \cite[Section 6]{bray2022harmonic}. We now appeal to the Ashtekar-Hansen-type formulas in 
		\cite[Theorem 2.3]{herzlich2016computing} and \cite[Theorem 2.2]{de2019mass}, which due to their asymptotic  character can easily be shown to apply to this more general setting, namely, to 
		$(\widetilde M_{\rm ext},\widetilde g)$ and $(M_{\rm ext},g)$, respectively. Exploring the fact that $B_{\Sigma_{\rm ext}}=0$, it then follows from these formulas that $m_{ADM}(\widetilde M_{\rm ext},\widetilde g)=2\mathfrak m_{(M_{\rm ext},g)}=0$.
		We may now apply the argument in the last paragraph of \cite[Section 6.2]{bray2022harmonic} to conclude that no horizons exist and in fact $(\widetilde M,\widetilde g)=(\mathbb R^3,\delta)$ isometrically. It follows that $(M,g)=(\mathbb R^3_+,\delta)$ isometrically, which completes the proof of Theorem \ref{thm:main}.

\bibliographystyle{alpha}
\bibliography{pmt-ls}

\newcommand{\etalchar}[1]{$^{#1}$}
\begin{thebibliography}{BHK{\etalchar{+}}23}

\bibitem[ABdL16]{almaraz2014positive}
S{\'e}rgio Almaraz, Ezequiel Barbosa, and Levi~L. de~Lima.
\newblock A positive mass theorem for asymptotically flat manifolds with a
  non-compact boundary.
\newblock {\em Communications in Analysis and Geometry}, 24(4):673--715, 2016.

\bibitem[AdLM20]{almaraz2019spacetime}
S{\'e}rgio Almaraz, Levi~L de~Lima, and Luciano Mari.
\newblock Spacetime positive mass theorems for initial data sets with
  noncompact boundary.
\newblock {\em International Mathematics Research Notices}, 2020.

\bibitem[ADM62]{arnowitt1962gravitation}
Richard Arnowitt, Stanley Deser, and Charles~W. Misner.
\newblock The dynamics of general relativity.
\newblock {\em Gravitation: an introduction to current research}, 1962.

\bibitem[AMO21]{agostiniani2021green}
Virginia Agostiniani, Lorenzo Mazzieri, and Francesca Oronzio.
\newblock A {G}reen's function proof of the positive mass theorem.
\newblock {\em arXiv:2108.08402}, 2021.

\bibitem[BHK{\etalchar{+}}23]{bray2023spacetime}
Hubert Bray, Sven Hirsch, Demetre Kazaras, Marcus Khuri, and Yiyue Zhang.
\newblock Spacetime harmonic functions and applications to mass.
\newblock In {\em Perspectives in Scalar Curvature}, pages 593--639. World
  Scientific, 2023.

\bibitem[BKKS22]{bray2022harmonic}
Hubert~L Bray, Demetre~P Kazaras, Marcus~A Khuri, and Daniel~L Stern.
\newblock Harmonic functions and the mass of 3-dimensional asymptotically flat
  riemannian manifolds.
\newblock {\em The Journal of Geometric Analysis}, 32(6):184, 2022.

\bibitem[Cha18]{chai2018positive}
Xiaoxiang Chai.
\newblock Positive mass theorem and free boundary minimal surfaces.
\newblock {\em arXiv:1811.06254}, 2018.

\bibitem[Chr08]{christodoulou2008mathematical}
Demetrios Christodoulou.
\newblock {\em Mathematical problems of general relativity I}, volume~1.
\newblock European Mathematical Society, 2008.

\bibitem[dL23]{de2023conserved}
Levi~Lopes de~Lima.
\newblock Conserved quantities in general relativity: the case of initial data
  sets with a non-compact boundary.
\newblock In {\em Perspectives in Scalar Curvature}, pages 489--518. World
  Scientific, 2023.

\bibitem[dLG15]{de2015adm}
Levi~Lopes de~Lima and Frederico Gir{\~a}o.
\newblock The {A}{D}{M} mass of asymptotically flat hypersurfaces.
\newblock {\em Transactions of the American Mathematical Society},
  367(9):6247--6266, 2015.

\bibitem[dLGM19]{de2019mass}
Levi~L de~Lima, Frederico Gir{\~a}o, and Amilcar Montalb{\'a}n.
\newblock The mass in terms of {E}instein and {N}ewton.
\newblock {\em Classical and Quantum Gravity}, 36(7):075017, 2019.

\bibitem[EK23]{eichmair2023doubling}
Michael Eichmair and Thomas Koerber.
\newblock Doubling of asymptotically flat half-spaces and the {R}iemannian
  {P}enrose inequality.
\newblock {\em Communications in Mathematical Physics}, pages 1--38, 2023.

\bibitem[Gro14]{gromov2014dirac}
Misha Gromov.
\newblock Dirac and {P}lateau billiards in domains with corners.
\newblock {\em Central European Journal of Mathematics}, 12:1109--1156, 2014.

\bibitem[Her16]{herzlich2016computing}
Marc Herzlich.
\newblock Computing asymptotic invariants with the {R}icci tensor on
  asymptotically flat and asymptotically hyperbolic manifolds.
\newblock {\em Annales Henri Poincar{\'e}}, 17(12):3605--3617, 2016.

\bibitem[HI01]{huisken2001}
Gerhard Huisken and Tom Ilmanen.
\newblock The inverse mean curvature flow and the {R}iemannian {P}enrose
  inequality.
\newblock {\em Journal of Differential Geometry}, 59(3):353--437, 2001.

\bibitem[HW20]{harlow2020covariant}
Daniel Harlow and Jie-qiang Wu.
\newblock Covariant phase space with boundaries.
\newblock {\em Journal of High Energy Physics}, 2020(10):1--52, 2020.

\bibitem[Koe19]{koerber2019riemannian}
Thomas Koerber.
\newblock The {R}iemannian {P}enrose inequality for asymptotically flat
  manifolds with non-compact boundary.
\newblock {\em arXiv:1909.13283}, 2019.

\bibitem[Lam11]{lam2011graph}
Mau-Kwong~George Lam.
\newblock The graph cases of the {R}iemannian positive mass and {P}enrose
  inequalities in all dimensions.
\newblock {\em arXiv:1010.4256}, 2011.

\bibitem[Li20]{li2020polyhedron}
Chao Li.
\newblock A polyhedron comparison theorem for $3$-manifolds with positive
  scalar curvature.
\newblock {\em Inventiones Mathematicae}, 219:1--37, 2020.

\bibitem[Mia02]{miao2002positive}
Pengzi Miao.
\newblock Positive mass theorem on manifolds admitting corners along a
  hypersurface.
\newblock {\em Advances in Theoretical and Mathematical Physics},
  6(6):1163--1182, 2002.

\bibitem[Mic11]{michel2011geometric}
Beno{\i}t Michel.
\newblock Geometric invariance of mass-like asymptotic invariants.
\newblock {\em Journal of Mathematical Physics}, 52(5):052504, 2011.

\bibitem[Ste22]{stern2022scalar}
Daniel~L Stern.
\newblock Scalar curvature and harmonic maps to ${S}^1$.
\newblock {\em Journal of Differential Geometry}, 122(2):259--269, 2022.

\bibitem[SY79]{schoen1979proof}
Richard Schoen and Shing-Tung Yau.
\newblock On the proof of the positive mass conjecture in general relativity.
\newblock {\em Communications in Mathematical Physics}, 65(1):45--76, 1979.

\bibitem[Wit81]{witten1981new}
Edward Witten.
\newblock A new proof of the positive energy theorem.
\newblock {\em Communications in Mathematical Physics}, 80(3):381--402, 1981.

\end{thebibliography}

	\end{document}